\author{Hubert Lacoin}
\address{IMPA, Estrada Dona Castorina 110,
Rio de Janeiro RJ-22460-320- Brasil}
\email{lacoin@impa.br}
\newtheorem{theorem}{Theorem}[section]
\newtheorem{lemma}[theorem]{Lemma}
\newtheorem{proposition}[theorem]{Proposition}
\newtheorem{propx}{Proposition}
\newcommand{\bbE}{{\ensuremath{\mathbb E}} }
\newcommand{\bbL}{{\ensuremath{\mathbb L}} }
\newcommand{\bbN}{{\ensuremath{\mathbb N}} }
\newcommand{\bbP}{{\ensuremath{\mathbb P}} }
\newcommand{\bbR}{{\ensuremath{\mathbb R}} }
\newcommand{\bbZ}{{\ensuremath{\mathbb Z}} }
\renewcommand{\epsilon}{\varepsilon}
\newcommand{\gep}{\varepsilon}       % \ge already exists...
\newcommand{\go}{\omega}
\newcommand{\gO}{\Omega}
\newcommand{\gl}{\lambda}
\newcommand{\cF}{{\ensuremath{\mathcal F}} }
\newcommand{\cP}{{\ensuremath{\mathcal P}} }
\newcommand{\bP}{{\ensuremath{\mathbf P}} }
\newcommand{\bE}{{\ensuremath{\mathbf E}} }
\newcommand{\ind}{\mathbf{1}}
\newcommand{\lint}{\llbracket}
\newcommand{\rint}{\rrbracket}
\begin{document}

%%%%%%%%%%%%%%%%%%%%%%%%%%%%%%%%%%%%%%%%%%%%%%%%%%%%%%%%%%%%%%%%%%%
%%                                                               %%
%% No need for \maketitle.                                       %%
%%                                                               %%
%%%%%%%%%%%%%%%%%%%%%%%%%%%%%%%%%%%%%%%%%%%%%%%%%%%%%%%%%%%%%%%%%%%

%%%%%%%%%%%%%%%%%%%%%%%%%%%%%%%%%%%%%%%%%%%%%%%%%%%%%%%%%%%%%%%%%%%
%%                                                               %%
%% Please replace what follows by the body of your article       %%
%% (up to the bibliography):                                     %%
%%                                                               %%
%%%%%%%%%%%%%%%%%%%%%%%%%%%%%%%%%%%%%%%%%%%%%%%%%%%%%%%%%%%%%%%%%%%

\title{A short proof of diffusivity for the directed polymers in the weak disorder phase}

\begin{abstract}
 We provide a new short and elementary proof of diffusivity for directed polymers in a random environment in the weak disorder phase.
\end{abstract}

 \maketitle

\section{Model and results}

\subsection{The model}
We consider $(X_i)_{i\ge 0}$ { the simple} random walk on $\bbZ^d$ starting from $0$ ({ with transition probability given by $P(X_{n+1}=x \ | \ X_n=y)=\frac{1}{2d}\ind_{\{|x-y|_1=1\}}$ where $|\cdot|_1$ is the $\ell_1$ norm on $\bbR^d$}) and $\zeta:=(\zeta_{k,x})_{(k,x)\in  \bbN \times \bbZ^d}$ be a collection of nonnegative i.i.d.\ random variables of mean $1$ (and probability distribution denoted by $\bbP$).
 We let  $\gO$ be the set of random walk trajectories (equiped with product $\sigma$-algebra) and $L^{\infty}(\gO)$  the set of bounded measurable functions on $\gO$. For a fixed realization of $\zeta$, and $n\ge 0$, we define a random measure $W^{\zeta}_n$ on $\gO$ by setting for $f\in L^{\infty}(\gO)$ 
\begin{equation}
 W^{\zeta}_n(f) = E\bigg[ \Big(\prod_{i=1}^n \zeta_{i,X_i}\Big)f(X)\bigg].
\end{equation}
We also write for $A\subset \gO$ measurable,  $W^{\zeta}_n(A)= W^{\zeta}_n(\ind_{A})$.
With some abuse of notation we let  $W^{\zeta}_n:= W^{\zeta}_n(\gO)$ denote the total mass of the measure ({ note in particular that $\bbE[W_n]=1$}).
On the event $W^{\zeta}_n>0$, we introduce the probability $P^{\zeta}_n$ on $\gO$ by setting
\begin{equation}
P^{\zeta}_n(A)=   W^{\zeta}_n(A) / W^{\zeta}_n.
\end{equation}
The probability $P^{\zeta}_n$ is known as the \textit{directed polymer in the  random environment $\zeta$}.
One can verify (see \cite[Lemma 1]{B89}) that the sequence $(W^{\zeta}_n)_{n\ge 0}$ is a martingale w.r.t.\ the filtration $(\cF_n)_{n\ge 0}$ defined by
$\cF_{n}:= \sigma( (\zeta_{k,x})_{(k,x)\in \lint 1,n\rint \times \bbZ^d})$ (where $\lint a,b \rint=[a,b]\cap \bbZ$).
Hence $W^{\zeta}_n$  converges almost surely as $n$ tends to infinity.
We set
 $W^{\zeta}_\infty:= \lim_{n\to \infty} W^{\zeta}_n$. We are interested in the regime  known as \textit{weak disorder} where the limit $W^{\zeta}_\infty$ is nontrivial, that is, not identically equal to zero. We recall here a standard result (see \cite[Proposition 3.1]{CY06} and \cite[Proposition 2.3]{HL12}) concerning this regime. A short proof is included  for completeness in { Section \ref{aba}}.

\begin{propx}\label{clacos}
 We have the following equivalence
 \begin{itemize}
  \item [(i)] $(W^{\zeta}_n)_{n\ge 1}$ is uniformly integrable and converges in $\bbL^1$.
  \item [(ii)] $\bbP(  W^{\zeta}_\infty>0 )>0$.
 \end{itemize}
If { $(i)$ and $(ii)$ hold}, we further have $\bbP(W^{\zeta}_{\infty}>0)= \bbP( \forall n>0,\ W^\zeta_n>0)$,
 in particular $\bbP(W^{\zeta}_{\infty}>0)=1$, if $\bbP(\zeta_{k,x}>0)=1$.

\end{propx}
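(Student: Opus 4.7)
The direction (i)$\Rightarrow$(ii) is immediate: $L^1$ convergence gives $\bbE[W_\infty^\zeta]=1$, ruling out $W_\infty^\zeta\equiv 0$ and hence yielding $\bbP(W_\infty^\zeta>0)>0$. The substance of the proposition is the converse and the equality of events, and I plan to derive both from the key identity
\[
\bbE[W_\infty^\zeta\mid\cF_n]=\lambda\,W_n^\zeta,\qquad \lambda:=\bbE[W_\infty^\zeta]\in[0,1].
\]

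To prove this identity, set $W_n^\zeta(y):=E\big[\prod_{i=1}^n\zeta_{i,X_i}\ind_{\{X_n=y\}}\big]$, so that $\sum_y W_n^\zeta(y)=W_n^\zeta$ (the sum being finite in $y$). The Markov property at time $n$ gives, for $m\ge n$,
\[
W_m^\zeta=\sum_y W_n^\zeta(y)\,\widehat W^{(y,n)}_{m},\qquad \widehat W^{(y,n)}_m:=E\Big[\prod_{i=n+1}^{m}\zeta_{i,X_i}\,\Big|\,X_n=y\Big],
\]
and letting $m\to\infty$ yields almost surely $W_\infty^\zeta=\sum_y W_n^\zeta(y)\widehat W^{(y,n)}_\infty$, where each $\widehat W^{(y,n)}_\infty$ depends only on $(\zeta_{k,x})_{k>n}$ and so is independent of $\cF_n$, with the law of $W_\infty^\zeta$ by translation invariance (in particular mean $\lambda$). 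Conditioning on $\cF_n$ yields the identity. Now $\bbE[W_\infty^\zeta\mid\cF_n]$ is a uniformly integrable martingale that converges a.s.\ to $W_\infty^\zeta$ by L\'evy's upward theorem, while $W_n^\zeta\to W_\infty^\zeta$ a.s.; hence $(1-\lambda)W_\infty^\zeta=0$ a.s. Under (ii) this rules out $\lambda<1$, so $\lambda=1$, and the identity becomes $\bbE[W_\infty^\zeta\mid\cF_n]=W_n^\zeta$, i.e., $W_n^\zeta\to W_\infty^\zeta$ in $L^1$.

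For the event identity, the inclusion $\{W_\infty^\zeta>0\}\subseteq\{\forall n,\,W_n^\zeta>0\}$ follows from $\bbE[W_{n+1}^\zeta\mid\cF_n]=W_n^\zeta$ together with nonnegativity (so $W_n^\zeta=0$ propagates). Conversely, under (ii) we have $p:=\bbP(W_\infty^\zeta=0)<1$, and the bound $W_\infty^\zeta\ge W_n^\zeta(y_0)\widehat W^{(y_0,n)}_\infty$ for any fixed $y_0\in\bbZ^d$, combined with the independence of $\widehat W^{(y_0,n)}_\infty$ from $\cF_n$, gives
\[
\bbP\big(W_\infty^\zeta>0\,\big|\,\cF_n\big)\ge (1-p)\,\ind_{\{W_n^\zeta(y_0)>0\}}.
\]
Taking the supremum over $y_0$ replaces the indicator by $\ind_{\{W_n^\zeta>0\}}$. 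Letting $n\to\infty$, the left side tends a.s.\ to $\ind_{\{W_\infty^\zeta>0\}}$ (L\'evy) and the right to $(1-p)\ind_{\{\forall n,\,W_n^\zeta>0\}}$ (monotone limit of a decreasing sequence of events); since $1-p>0$ and the left side is $\{0,1\}$-valued, this forces $\{\forall n,\,W_n^\zeta>0\}\subseteq\{W_\infty^\zeta>0\}$ almost surely. If furthermore $\bbP(\zeta_{k,x}>0)=1$, every nearest-neighbor trajectory of length $n$ has strictly positive weight, so $W_n^\zeta>0$ deterministically and $\bbP(W_\infty^\zeta>0)=1$.

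The entire argument hinges on spotting and establishing the identity $\bbE[W_\infty^\zeta\mid\cF_n]=\lambda W_n^\zeta$; once it is in hand, all that remains is one application of L\'evy's upward theorem in each of the two parts and the trivial monotonicity $\{W_{n+1}^\zeta>0\}\subseteq\{W_n^\zeta>0\}$.
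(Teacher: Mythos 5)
Your proof is correct and follows essentially the same route as the paper: both hinge on the decomposition $W^{\zeta}_\infty=\sum_y W^{\zeta}_n(X_n=y)\,\theta_{n,y}W^{\zeta}_\infty$ and the resulting identity $\bbE[W^{\zeta}_\infty\mid\cF_n]=\bbE[W^{\zeta}_\infty]\,W^{\zeta}_n$, and both deduce the event identity from $\bbP(W^{\zeta}_\infty>0\mid\cF_n)\ge \bbP(W^{\zeta}_\infty>0)\ind_{\{W^{\zeta}_n>0\}}$. The only cosmetic differences are that you close the martingale via L\'evy's upward theorem where the paper invokes Scheff\'e's lemma, and you take a supremum over fixed sites $y_0$ where the paper selects the lexicographically minimal occupied site.
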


\subsection{An invariance principle for $P^{\zeta}_n$}

We extend the trajectory of $X$ to $\bbR_+$ by linear interpolation setting, for $s\in \bbR_+$, $X_s:=(s-\lfloor s \rfloor)  X_{\lfloor  s \rfloor}+ (\lfloor s \rfloor+1-s)  X_{\lfloor s \rfloor +1} $ ($\lfloor s\rfloor$ denotes the integer part of $s$).
The diffusively rescaled trajectory $(X^{(n)}_t)_{t\in[0,1]}$ is defined by
$ X^{(n)}_t= \sqrt{\frac{1}{n}} X_{nt}.$

\medskip

Note that $X^{(n)}$ is an element of
 $C_0([0,1])$ - the set of continuous function $[0,1] \to \bbR^d$ taking value  $0$ at $0$. We equip $C_0([0,1])$ with  the topology of uniform convergence.
We let $\mathcal C$ denote the set of bounded continuous  functions $C_0([0,1])\to \bbR$.
 Under $P$, Donsker's Theorem states that the distribution of  $(X^{(n)}_t)_{t\in[0,1]}$ converges weakly to that of a standard Brownian motion.
It has been established in \cite{CY06} that this scaling behavior also holds under $P^{\zeta}_n$.
The objective of this note is to provide a new short proof of this result.

 \begin{theorem}\label{main}

When $\bbP(  W^{\zeta}_\infty>0 )>0$, then for any  $\varphi\in \mathcal C$, we have the following convergence in probability under $\bbP':=\bbP(\cdot  \ | \ W^{\zeta}_\infty>0)$
\begin{equation}\label{convwiener}
 \lim_{n\to \infty} E^{\zeta}_n\left[\varphi(X^{(n)})\right]=   \bE\left[ \varphi(B)\right].
\end{equation}
 where $\bE$ is the expectation w.r.t.\ a $d$-dimensional Brownian Motion $(B_t)_{t\in[0,1]}$  with covariance $\frac{1}{d}I_d$ where $I_d$ is the identity matrix.
  \end{theorem}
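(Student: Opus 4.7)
The plan is to reduce the theorem to proving convergence in $\bbP$-probability of the numerator $W_n^{\zeta}(\varphi(X^{(n)}))$ towards $W_{\infty}^{\zeta}\,\bE[\varphi(B)]$. Indeed, on $\{W_{\infty}^{\zeta}>0\}$ one has $W_n^{\zeta}\to W_{\infty}^{\zeta}>0$ almost surely, so via $E_n^{\zeta}[\varphi(X^{(n)})]=W_n^{\zeta}(\varphi(X^{(n)}))/W_n^{\zeta}$ and the uniform bound $|W_n^{\zeta}(\varphi(X^{(n)}))|\le\|\varphi\|_{\infty}\,W_n^{\zeta}$, the triangle inequality transfers the desired convergence from the numerator to the ratio.

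To handle the numerator, I would condition on $\cF_k$ for an intermediate time $k$. Fubini combined with the independence and mean-$1$ properties of $\{\zeta_{i,x}\}_{i>k}$ relative to $\cF_k$ yields
\[
\bbE\big[W_n^{\zeta}(\varphi(X^{(n)}))\,\big|\,\cF_k\big]=E\Big[\prod_{i=1}^{k}\zeta_{i,X_i}\,\varphi(X^{(n)})\Big].
\]
For fixed $k$, as $n\to\infty$ the first $k$ steps contribute at most $k/\sqrt{n}\to 0$ to the sup norm of $X^{(n)}$, so by Donsker's theorem and continuity of $\varphi$ one has $E[\varphi(X^{(n)})\mid X_1,\ldots,X_k]\to\bE[\varphi(B)]$ for every deterministic realization of the first $k$ steps. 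Summing the finitely many configurations with weights $\prod_i\zeta_{i,X_i}$ gives
\[
\bbE\big[W_n^{\zeta}(\varphi(X^{(n)}))\,\big|\,\cF_k\big]\xrightarrow[n\to\infty]{\text{a.s.}}W_k^{\zeta}\,\bE[\varphi(B)],
\]
and then $W_k^{\zeta}\to W_{\infty}^{\zeta}$ (almost surely and in $L^1$ by Proposition~A) delivers the target $W_{\infty}^{\zeta}\,\bE[\varphi(B)]$ in the limit $k\to\infty$.

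The main obstacle is controlling the martingale difference
\[
D_{n,k}:=W_n^{\zeta}(\varphi(X^{(n)}))-\bbE\big[W_n^{\zeta}(\varphi(X^{(n)}))\,\big|\,\cF_k\big],
\]
and showing that $D_{n,k}\to 0$ in $\bbP$-probability when $n\to\infty$ followed by $k\to\infty$. Without an $L^2$-moment assumption on $\zeta$, the classical summation of squared martingale increments is unavailable, and this is precisely where the weak disorder hypothesis—through the uniform integrability of $(W_n^{\zeta})$—must play a role. My expectation is that one exploits the pointwise bound $|D_{n,k}|\le 2\|\varphi\|_{\infty}W_n^{\zeta}$ together with the fact that the auxiliary martingale $(M_j^{(n)}:=\bbE[W_n^{\zeta}(\varphi(X^{(n)}))\mid\cF_j])_{j\le n}$ is uniformly bounded in $L^1$ by $\|\varphi\|_{\infty}$, combining this with a Fatou-type or diagonal-extraction argument and the $L^1$ convergence of $(W_n^{\zeta})$ to close the gap. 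The technical heart of the short proof lies in making this bootstrap precise in the general weak disorder regime.
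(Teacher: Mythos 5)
Your overall architecture matches the paper's: you compare $W_n^{\zeta}(\varphi^{(n)})$ (where $\varphi^{(n)}(X)=\varphi(X^{(n)})$) with the same quantity at an intermediate time, observe that $\bbE[W_n^{\zeta}(\varphi^{(n)})\,|\,\cF_k]=W_k^{\zeta}(\varphi^{(n)})$, handle the latter by Donsker's theorem since the environment only biases the first $k$ steps and $k/\sqrt{n}\to 0$, and then divide by the total mass which converges to $W_\infty^{\zeta}>0$ under $\bbP'$. All of that is sound and is exactly how the paper organizes the argument (with $m_n=\lfloor n^{1/4}\rfloor$ in place of a fixed $k$ sent to infinity afterwards).

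However, the step you flag as ``the main obstacle'' --- showing that $D_{n,k}=W_n^{\zeta}(\varphi^{(n)})-W_k^{\zeta}(\varphi^{(n)})\to 0$ in probability --- is precisely the content of the theorem that is not routine, and none of the tools you propose would close it. The pointwise bound $|D_{n,k}|\le 2\|\varphi\|_\infty W_n^{\zeta}$ together with uniform integrability gives uniform integrability of the family $\{D_{n,k}\}$, but uniform integrability plus a Fatou or diagonal-extraction argument cannot by itself produce convergence to zero: the danger is that $W_n(\varphi^{(n)})-W_k(\varphi^{(n)})$ and $W_n(\bar\varphi^{(n)})-W_k(\bar\varphi^{(n)})$ (with $\bar\varphi=1-\varphi$, after normalizing $\varphi$ to take values in $[0,1]$) are both large with opposite signs while their sum $W_n-W_k$ is small in $L^1$. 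The paper's key new idea, which your proposal is missing, is Lemma 2.1: for any $g:\gO\to[0,1]$ and $m\le n$,
\begin{equation}
\bbE\left[|W_n(g)-W_m(g)|\right]\le \bbE\left[|W_n-W_m|\right],
\end{equation}
proved by reducing to positive parts and applying the Harris/FKG inequality conditionally on $\cF_m$: both $W_n(\bar g)-W_m(\bar g)$ and $\ind_{\{W_n(g)\ge W_m(g)\}}$ are increasing functions of the post-$m$ environment, hence nonnegatively correlated given $\cF_m$, which rules out exactly the cancellation scenario above. Combined with the fact that $(W_n)$ is Cauchy in $L^1$ under weak disorder (Proposition A), this gives $\bbE[|D_{n,m_n}|]\to 0$ and completes the proof. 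Without this lemma (or a substitute of comparable strength), your argument has a genuine gap at its technical heart.
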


\subsection{Almost-sure subsequencial convergence}

Since $C_0([0,1])$ is separable, the topology associated with weak convergence on $\cP(C_0([0,1]))$ -- the set of probability measures on $C_0([0,1])$ -- is metrizable.
For instance (cf.\ \cite[Appendix III, Theorem 5]{Bill}) it is generated by the Lévy-Prokhorov metric which is defined by
\begin{equation}
d_{\mathrm{LP}}(\pi,\pi'):= \inf\{ \gep\in(0,1] \ : \ \forall A\subset C_0([0,1]), \
\pi(A)\le \pi'(A_{\gep})+\gep \text{ and } \pi'(A)\le \pi(A_{\gep})+\gep\}
\end{equation}
where $A$ above is a Borel subset of  $C_0([0,1])$ and $A_{\gep}$ is the $\gep$-neighborhood of $A$, $$A_{\gep}:= \{ \theta \in C_0([0,1]): \exists \theta'\in A, |\theta-\theta'|_{\infty}<\gep\}.$$
{The convergence \eqref{convwiener} applied to a finite family of functions $(\varphi_i)_{i\in I}$ implies that\\ $\lim_{n\to \infty}\bbP'\left(P^{\zeta}_n\left(X^{(n)} \in \cdot\right)\in U\right)=1$, for a all $U$ in  neighborhood  basis of  $\bP(B\in \cdot)$ for the weak topology.  In particular}
\begin{equation}\label{inproba}
 \forall \gep>0, \quad \lim_{n\to \infty} { \bbP'}\left( d_{\mathrm{LP}}\left(P^{\zeta}_{n}(X^{(n)}\in \cdot), \bP(B\in \cdot)\right) <\gep \right)=1.
\end{equation}
This implies that one can find a sequence $(n_k)_{k\ge 0}$ tending to infinity such that the following weak convergence of probability measures holds $\bbP'$ almost surely
\begin{equation}
 \lim_{k\to \infty} P^{\zeta}_{n_k}(X^{(n_k)}\in \cdot)= \bP(B\in \cdot).
\end{equation}

\subsection{A by-product of our proof}
\noindent We end this result section with a  comparision result which states that  $P^{\zeta}_n(B)$ and $P(B)$  are close in probability if $B$ does not depend in the first few steps of the walk $X$.
More precisely, if we let $\mathcal G_m:= \sigma((X_{m+i}-X_m)_{i\ge 0})$ the following holds.
 \begin{proposition}\label{auxil}
When $\bbP(  W^{\zeta}_\infty>0 )>0$, we have
\begin{equation}\label{tyui}
  \lim_{m\to \infty} \sup_{B\in \mathcal G_m} \sup_{n\ge 0} \bbE'\left[ |P^{\zeta}_n(B)-P(B)|\right]=0.
\end{equation}
 \end{proposition}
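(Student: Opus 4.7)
The plan is to split the supremum over $n$ into two regimes and reduce the problem to an estimate at $n=\infty$. For $n\le m$ the $\sigma$-algebras $\sigma(X_1,\ldots,X_n)$ (which carries the weight $\prod_{i=1}^n \zeta_{i,X_i}$) and $\cG_m$ are $P$-independent, as they involve disjoint sets of walk-increments. Hence $W^{\zeta}_n(B)=P(B)\,W^{\zeta}_n$ and $P^{\zeta}_n(B)=P(B)$ exactly, so the supremum over $n$ reduces to $n>m$.

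For $n>m$, the Markov property of $X$ at time $m$ (combined with the shift-invariance of $\zeta$) yields the decomposition
\begin{equation}
W^{\zeta}_n(B)=\sum_{y\in\bbZ^d} U^{\zeta}_m(y)\,W^{[m,y]}_{n-m}(A_B), \qquad W^{\zeta}_n=\sum_{y} U^{\zeta}_m(y)\,W^{[m,y]}_{n-m},
\end{equation}
where $\ind_B=\ind_{A_B}\circ\theta_m$, $U^{\zeta}_m(y):=E\bigl[\prod_{i=1}^m \zeta_{i,X_i}\ind_{\{X_m=y\}}\bigr]\in\cF_m$, and $W^{[m,y]}_k(A):=E\bigl[\prod_{j=1}^k\zeta_{m+j,y+X_j}\ind_A(X)\bigr]$ is independent of $\cF_m$ and distributed as $W^{\zeta}_k(A)$. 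In particular $\bbE[W^{\zeta}_n(B)\mid\cF_m]=P(B)\,W^{\zeta}_m$, so $M_n:=W^{\zeta}_n(B)-P(B)\,W^{\zeta}_n$ is an $\cF_n$-martingale with $M_m=0$ that converges in $L^1$ to $M_\infty:=W^{\zeta}_\infty(B)-P(B)\,W^{\zeta}_\infty$, and conditional Jensen gives $\bbE|M_n|\le\bbE|M_\infty|$ for every $n$.

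Proposition \ref{clacos} gives $W^{\zeta}_n>0$ for every $n$ on $\{W^{\zeta}_\infty>0\}$, and since $W^{\zeta}_n\to W^{\zeta}_\infty>0$ on that event the random variable $\inf_{n\ge 0}W^{\zeta}_n$ is strictly positive $\bbP'$-a.s. Splitting $\bbE'|P^{\zeta}_n(B)-P(B)|\le \bbE'[|M_n|/W^{\zeta}_n]$ on the events $\{\inf_n W^{\zeta}_n\ge\delta\}$ and its complement (using the trivial bound $|P^{\zeta}_n(B)-P(B)|\le 1$ on the latter) reduces the proposition to proving
\begin{equation}
\lim_{m\to\infty}\sup_{B\in\cG_m}\bbE\bigl|W^{\zeta}_\infty(B)-P(B)\,W^{\zeta}_\infty\bigr|=0.
\end{equation}
Using $W^{\zeta}_m(B)=P(B)\,W^{\zeta}_m$ for $B\in\cG_m$, the triangle inequality further reduces this to the uniform-in-$B$ decay of $\bbE|W^{\zeta}_\infty(B)-W^{\zeta}_m(B)|$, since the companion term $P(B)\,\bbE|W^{\zeta}_\infty-W^{\zeta}_m|$ vanishes as $m\to\infty$ by the $L^1$ convergence of $W^{\zeta}_n$ supplied by Proposition \ref{clacos}.

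The main obstacle is this last uniform estimate. By the decomposition,
\[
W^{\zeta}_\infty(B)-W^{\zeta}_m(B)=\sum_y U^{\zeta}_m(y)\bigl[W^{[m,y]}_\infty(A_B)-P(A_B)\bigr]
\]
is a sum of $\cF_m$-independent centered variables with $\cF_m$-measurable weights; for the special choice $A_B=\gO$ the sum telescopes to $W^{\zeta}_\infty-W^{\zeta}_m$ which vanishes in $L^1$, but for a general $A_B\subset\gO$ the naive triangle-inequality bound $\sum_y P(X_m=y)\,\bbE|W^{\zeta}_\infty(A_B)-P(A_B)|$ does not decay in $m$. The cancellation across different $y$'s must therefore be exploited, presumably by leveraging the near-independence of the $W^{[m,y]}_\infty$'s for $y,y'$ in the spread-out typical support of the $m$-step walk together with the $L^1$-convergence of the shifted partition functions; I expect this to be precisely the core estimate driving the proof of Theorem \ref{main}, so that Proposition \ref{auxil} emerges essentially for free from that same proof.
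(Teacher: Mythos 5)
Your reduction is sound and, in its skeleton, matches the paper's: you correctly observe that $P^{\zeta}_n(B)=P(B)$ for $n\le m$ when $B\in\cG_m$, you correctly handle the random denominator $W^{\zeta}_n$ by splitting on $\{\inf_n W^{\zeta}_n\ge\delta\}$ (which is legitimate since $\inf_n W^{\zeta}_n>0$ holds $\bbP'$-a.s.\ by Proposition \ref{clacos}), and you correctly reduce everything to the single estimate
\begin{equation}
\lim_{m\to\infty}\ \sup_{A}\ \bbE\left[\,\bigl|W^{\zeta}_\infty(A)-W^{\zeta}_m(A)\bigr|\,\right]=0,
\end{equation}
with the supremum over \emph{all} measurable $A\subset\gO$. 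But this is precisely the paper's main technical input (Lemma \ref{techos}: $\bbE[|W_n(g)-W_m(g)|]\le\bbE[|W_n-W_m|]$ for every $g:\gO\to[0,1]$, combined with the uniform integrability of $(W_n)$ from Proposition \ref{clacos}), and you do not prove it --- you explicitly flag it as ``the main obstacle'' and leave it to an unspecified cancellation. That is a genuine gap: the entire content of the proposition lives in this one inequality, and everything you did prove is routine bookkeeping around it.

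Moreover, the mechanism you conjecture for closing the gap (near-independence of the shifted partition functions $W^{[m,y]}_\infty$ across the spread-out support of $X_m$, i.e.\ a second-moment-type cancellation) is not how the paper proceeds, and pursuing it would likely reintroduce the $\bbL^2$ assumptions the paper is designed to avoid. The actual argument is a one-page application of the Harris/FKG inequality: writing $|x|=2x_+-x$ one reduces to bounding $\bbE[(W_n(g)-W_m(g))_+]$, and the error term $\bbE[(W_n(\bar g)-W_m(\bar g))\ind_{\{W_n(g)\ge W_m(g)\}}]$ (with $\bar g=1-g$) is shown to be nonnegative because, conditionally on $\cF_m$, both $W_n(\bar g)-W_m(\bar g)$ and $\ind_{\{W_n(g)\ge W_m(g)\}}$ are increasing functions of the environment variables $(\zeta_{k,x})_{k>m}$, hence positively correlated, while $\bbE[W_n(\bar g)-W_m(\bar g)\mid\cF_m]=0$ by the martingale property. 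Without this (or some substitute) your proposal does not establish the proposition.
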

 \noindent The above does not imply that the restriction of $P^{\zeta}_n$ and $P$ to $\mathcal G_m$ are close  in total-variation and in fact as a consequence  \cite[Theorem 6.2]{CY06}
$\bbP'$-a.s.\ we have for any $m\ge 0$,
 $\lim_{n\to \infty}\sup_{B\in \mathcal G_m} |P^{\zeta}_n(B)-P(B)|=1$.
 However \eqref{tyui} implies directly that
 \begin{equation}\label{finalimplication}
   \lim_{m\to \infty} \sup_{B\in \mathcal G_m} \sup_{n\ge 0} |\bbE'[P^{\zeta}_n(B)]-P(B)|=0.
 \end{equation}
In words, as $m\to \infty$ the distribution of $(X_{m+k} - X_m)_{k\ge 0}$ under the averaged probability  $\bbE' P^{\zeta}_n$ converges in total-variation   to that of the simple random walk \textit{uniformly in $n$}.

\subsection{Comments on main the result and its proof}

The main result in this paper has been first proved as  \cite[Theorem 1.2]{CY06} , in the setup where $\zeta_{k,x}=e^{\beta \go_{k,x}-\gl(\beta)}$, $\go_{k,x}$ is a field of i.i.d.\ real valued random variables indexed by $\bbN\times \bbZ^d$, $\beta\in \bbR$ and $\gl(\beta):=\log \bbE[e^{\beta \go_{k,x}}]$ (we refer to the introduction of \cite{CY06} for the history of this result). The assumption is made that
$\gl(\beta)$ is finite for all $\beta$.
This corresponds to assuming that $\zeta_{x,k}$ has moments of all orders (positive and negative).

\medskip

The result was then extended in \cite{NakaLSE} to a wider setup   where  the coefficients $\zeta_{x,k}$ are replaced by random  matrices with nonnegative coefficients. Besides, the setup of \cite{NakaLSE} allows for spatial correlation and reduces the integrability assumption, assuming only that $\bbE\left[ (\zeta_{x,k})^2\right]=1$. { In \cite{Wei16}, a variant of the result for the case when $X$ is a random walk on $\bbZ$ in the domain of attraction of an $\alpha$-stable process is presented (in that case $\varphi$ in \eqref{convwiener} is a continuous function on the space of càdlàg path $D[0,1]$ and $B$ is replaced by an $\alpha$-stable process).}

\medskip

The arguments presented in \cite{CY06,NakaLSE} rely strongly on the existence of a second moment for $\zeta_{x,k}$. Since there exists environments $\zeta$ with infinite second moment such that $\bbP(W^{\zeta}_{\infty}>0)>0$ (see for instance \cite[Theorem 1.3]{V21}), our proof provides a genuine extension of the domain of validity of the result. On the other hand our proof does not fully  extend to the setup considered in \cite{NakaLSE} since it requires positive correlation to make use of Harris/FKG inequality.

\medskip

For simplicity, we restricted this note to the case where the reference walk is the simple random walk on $\bbZ^d$. Only minor modifications would be required to treat the case of a general walk $(X_k)$ with increments { either} in the domain of attraction of Brownian Motion { or of an $\alpha$-stable process}.  In the same manner, the proof can also be adapted to continuous or semicontinuous models constructed from a Poisson process  or a Gaussian white noise. This includes the continuous time directed polymer on $\bbZ^d$ with Brownian environment  \cite{CH06}, the Brownian Polymer in a Poisson environment  \cite{Cpoisson} and the Brownian Polymer in an environment given by a space-time Gaussian white noise which is mollified in space \cite{MSZ18}.

\medskip

\section{Proofs}

\subsection{Main Lemma}
For better readability we drop the dependence in $\zeta$ in the notation. Our main technical input is the following result which controls the $\bbL^1$ rate of convergence of $(W_n(g))_{n\ge 0}$ uniformly in $g$.

\begin{lemma}\label{techos}
 Given a function $g: \gO\to {[0,1]}$,  for any $m\le n$ we have
\begin{equation}\label{plupetit}
\bbE\left[|W_n(g)-W_m(g)| \right] \le  \bbE\left[ |W_n-W_m| \right]
\end{equation}

\end{lemma}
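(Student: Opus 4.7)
The plan is to fix the environment up to time $m$, rewrite both sides of \eqref{plupetit} as mean-zero quantities that are coordinatewise monotone in the environment after time $m$, and conclude via the Harris/FKG inequality. Let $\cF_m:=\sigma(\zeta_{i,x}\,:\,i\le m)$, and for a trajectory $\pi\in\gO$ write
\[
 M^{\le m}(\pi):=\prod_{i=1}^{m}\zeta_{i,\pi_i},\qquad M^{>m}(\pi):=\prod_{i=m+1}^{n}\zeta_{i,\pi_i}.
\]
Since $\bbE[M^{>m}(\pi)]=1$ for every fixed $\pi$, I would then set
\[
 U:=W_n(g)-W_m(g)=E\bigl[g(X)M^{\le m}(X)(M^{>m}(X)-1)\bigr],
\]
\[
 U+V:=W_n-W_m=E\bigl[M^{\le m}(X)(M^{>m}(X)-1)\bigr],
\]
with $V:=E\bigl[(1-g(X))M^{\le m}(X)(M^{>m}(X)-1)\bigr]$.

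Conditionally on $\cF_m$, the path weights $g(\pi)M^{\le m}(\pi)$ and $(1-g(\pi))M^{\le m}(\pi)$ are nonnegative and $\cF_m$-measurable, while for each fixed $\pi$ the quantity $M^{>m}(\pi)-1$ is a coordinatewise nondecreasing mean-zero function of the independent family $\zeta^{>m}:=(\zeta_{i,x})_{i>m,\,x\in\bbZ^d}$. Hence both $U$ and $V$ are, given $\cF_m$, nondecreasing functions of $\zeta^{>m}$ with vanishing conditional expectation.

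The argument then reduces to the following elementary fact: \emph{if $X,Y\in L^1$ are mean-zero random variables that are both coordinatewise nondecreasing functions of a common family of independent random variables, then $\bbE|X|\le \bbE|X+Y|$.} To prove it, set $s:=2\ind_{\{X\ge 0\}}-1$; since $\{X\ge 0\}$ is an up-set in the environment (as $X$ is nondecreasing in it), $s$ is bounded and nondecreasing, and $sX=|X|$. The Harris/FKG inequality applied to $s$ and $Y$ yields $\bbE[sY]\ge(\bbE s)(\bbE Y)=0$, and $|X+Y|\ge s(X+Y)$ gives $\bbE|X+Y|\ge\bbE[sX]+\bbE[sY]\ge \bbE|X|$. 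Applying this conditionally on $\cF_m$ to the pair $(U,V)$ and then integrating out $\cF_m$ proves \eqref{plupetit}.

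The main conceptual step is the elementary fact above; the rest is algebraic rewriting, once the nonnegativity of the weights coming from $g$, $1-g$, and $M^{\le m}$ and the monotonicity of the family $(M^{>m}(\pi))_\pi$ in $\zeta^{>m}$ are recognized. A minor technical point is justifying Harris/FKG with $s$ bounded and $Y$ only integrable, which follows from the bounded version by a standard truncation of $Y$.
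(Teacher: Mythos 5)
Your proof is correct and follows essentially the same route as the paper: decompose $W_n-W_m$ into the $g$ and $1-g$ parts, observe that conditionally on $\cF_m$ both are mean-zero and increasing in the post-$m$ environment, and apply Harris/FKG to the sign (respectively, indicator) of the $g$-part against the $\bar g$-part. Your abstract formulation $\bbE|X|\le\bbE|X+Y|$ is just the paper's inequality $\bbE[(W_n(g)-W_m(g))_+]\le\bbE[(W_n-W_m)_+]$ rewritten via $|x|=2x_+-x$ for mean-zero variables, so the two arguments coincide in substance.
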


 \begin{proof}

Using the notation $x_+= \max(x,0)$. we are going to prove that
\begin{equation}\label{plupetit2}
 \bbE\left[(W_n(g)-W_m(g))_+ \right] \le  \bbE\left[ (W_n-W_m)_+ \right].
\end{equation}
which is equivalent to \eqref{plupetit} since $|x|=2x_+-x$.
We set $\bar g= 1-g$, we have
\begin{equation}\begin{split}
\bbE\left[ (W_n-W_m)_+ \right]&\ge\bbE\left[ (W_n-W_m)_+ \ind_{\{W_n(g)\ge W_m(g)\}}\right]\\
&\ge \bbE\left[ (W_n(g)-W_m(g))_+\right] + \bbE\left[(W_n(\bar g)-W_m(\bar g)) \ind_{\{W_n(g)\ge W_m(g)\}}  \right].
\end{split}
 \end{equation}
In the second inequality we simply used the fact that $(a+b)_+\ge a + b$. To conclude we simply need to show that
\begin{equation}\label{final}
\bbE\left[(W_n(\bar g)-W_m(\bar g)) \ind_{\{W_n(g)\ge W_m(g)\}}  \right]\ge 0.
\end{equation}
Since $\bar g,g \ge 0$, when fixing the value of $(\zeta_{k,x})_{k\in \lint 1,m\rint,x\in \bbZ^d}$ , $W_n(\bar g)-W_m(\bar g)$ is an increasing function of the other coordinates $(\zeta_{k,x})_{k\ge m+1,x\in \bbZ^d}$ and so is  $\ind_{\{W_n(g)\ge W_m(g)\}}$. Using the Harris/FKG inequality which states that increasing functions on a product space are positively correlated (see for instance \cite[Theorem 3]{preston} for a general statement in the continous setup) 
we obtain that  \begin{multline}
 \bbE\left[ (W_n(\bar g)-W_m(\bar g)) \ind_{\{W_n(g)\ge W_m(g)\}} \ | \ \cF_m  \right]\\
 \ge \bbE\left[ (W_n(\bar g)-W_m(\bar g)) \ | \ \cF_m\right] \bbE\left[ \ind_{\{W_n(g)\ge W_m(g)\}} \ | \ \cF_m \right]=0.
\end{multline}
which concludes the proof of \eqref{final}.
 \end{proof}

\subsection{Proof of Theorem \ref{main}}
We assume that $\bbP( W_{\infty}>0)>0$.
By scaling we may assume that $\varphi$ { takes values in $[0,1]$}.
We set $m=m_n=\lfloor n^{1/4} \rfloor$. Using  Lemma \ref{techos} for $g=\varphi^{(n)}$ defined by $\varphi^{(n)}(X)=\varphi(X^{(n)})$  we have
\begin{equation}\label{appli}
\lim_{n\to \infty}\bbE\left[ \left|W_n(\varphi^{(n)})- W_m(\varphi^{(n)})\right|\right]\le \lim_{n\to \infty}\bbE\left[ |W_n-W_m| \right]=0,
\end{equation}
where the last equality follows from Proposition \ref{clacos}. This implies in turn that {
\begin{equation}\label{erty}
 \lim_{n\to \infty}\left|E^\zeta_n[\varphi(X^{(n)})]- E^{\zeta}_m[\varphi(X^{(n)})]\right|=
  \lim_{n\to \infty}\left| \frac{W_n(\varphi(X^{(n)}))}{W_n}- \frac{W_m(\varphi(X^{(n)}))}{W_m}\right|=0
\end{equation}
in probability under $\bbP'$  since the denominators both converge a.s.\ to $W_{\infty}$ which is positive under $\bbP'$ by definition}.
To conclude we simply observe that $\bbP'$-almost surely we have
\begin{equation}\label{firststep}
\lim_{n\to \infty}|E^{\zeta}_m[\varphi(X^{(n)}) ]-\bE[\varphi(B)]|=0,
  \end{equation}
 The above holds because {$P^{\zeta}_m(X^{(n)}\in \cdot)$} converges weakly to { $\bP\left( B\in \cdot \right)$}. Indeed, { since the environment only affects the distribution of the first $m$ increments under $P^{\zeta}_m$, we can couple $(X_k)$ with dsitribution $P^{\zeta}_m$ with a simple random walk $(Y_k)$ in such a way that the increments of $X$ and $Y$ coincide starting from step $m+1$. With this coupling, $|X_k-Y_k|_1\le m$ for every $k\ge 0$, and thus $\max_{t\in [0,1]}|X^{(n)}_t-Y^{(n)}_t|_1\le m/\sqrt{n}$. Since $m/\sqrt{n}=o(1)$ with our choice of $m$,}
 \eqref{firststep} follows from Donsker's Theorem for $Y^{(n)}$. \qed

 {
 \subsection{Proof of Proposition \ref{auxil}}

 From Lemma \ref{techos} and Proposition \ref{clacos} we have
\begin{equation}\label{troop}
 \lim_{m\to \infty} \sup_{n\ge m} \sup_{A\in \mathcal G_0} \bbE\left[ |W_n(A)-W_m(A)|\right]=0.
\end{equation}
From the definition of $P^{\zeta}_n$ we have
\begin{equation}
 |P^{\zeta}_n(A)-P^{\zeta}_m(A)|\le \ind_{\{ \min(W_n,W_m)>\gep\}}+ \gep^{-1}\left( |W_n-W_m| + |W_n(A)-W_m(A)|\right)
\end{equation}
and hence \eqref{troop} implies that
\begin{equation}
  \lim_{m\to \infty} \sup_{n\ge m} \sup_{A\in \mathcal G_0} \bbE'\left[ |P^{\zeta}_n(A)-P^{\zeta}_m(A)|\right]=0.
\end{equation}
We obtain the result by applying the above for $B\in \mathcal G_m$ (since we have $P^{\zeta}_n(B)=P(B)$ for $n\le m$ we can consider the sup for $n\ge 0$). \qed
}
\section{Proof of Proposition \ref{clacos}}\label{aba}
\noindent The proposition follows from the following
\begin{equation}\label{tropic}
  \bbE[W_\infty]\in\{0,1\} \quad \text{ and } \quad \bbP(W_{\infty}>0) \times \bbP\left(\{\forall n, W_{n}>0\} \setminus \{W_{\infty}>0 \}\right)=0.
  \end{equation}
The first claim implies the equivalence of $(i)$ and $(ii)$ via Scheffé's Lemma and the second one implies the other implication: since the sequence of events $(\{W_{n}>0\})_{n\ge 0}$ is noncreasing for the inclusion order, we have  $\{W_{\infty}>0\}\subset \{\forall n, W_{n}>0\} $.
Our starting point is the following decomposition of the infinite volume partition function
\begin{equation}\label{crops}
 W_{\infty}= \sum_{z\in \bbZ^d}  W_n(X_n=z) \theta_{n,z} W_\infty.
\end{equation}
where the shifted environment  $\theta_{n,z} \zeta$ is defined by $(\theta_{n,z} \zeta)_{k,x}= \zeta_{n+k,z+x}$ and 
$\theta_{n,z} W^{\zeta}_\infty:= W^{\theta_{n,z}\zeta}_\infty.$
Taking conditional expectation and using translation invariance for the environment we have
\begin{equation}
 \bbE\left[ W_{\infty} \ | \ \cF_n \right]=\sum_{x\in \bbZ^d}   W_n(X_n=z) \bbE\left[\theta_{n,z} W_\infty\right]=  W_n \bbE\left[ W_{\infty}\right].
\end{equation}
Taking the limit in $n\to \infty$ one finally obtains $W_{\infty}= W_{\infty }\bbE\left[ W_{\infty}\right]$ and hence $\bbE[W_\infty]= \bbE[W_\infty]^2$, proving the first point.
For the second result we deduce from \eqref{crops} that
\begin{equation}\label{trop}\begin{split}
  \bbE\left[ \ind_{\{W_\infty>0\}} \ | \ \cF_n \right]& = \bbP\left[ \exists z,\  \theta_{n,z} W_\infty>0 \ ; \   W_n(X_n=z)>0 \ | \ \cF_n\right]\\ &\ge   \ind_{\{W_n>0\}}   \bbP(W_\infty>0).
 \end{split}\end{equation}
To obtain the last inequality above, we equip $\bbZ^d$ with the lexicographical order and { on the event $\{W_n>0\}$} we set
 $Z_n:=\min\{  z\in\bbZ^d : W_n(X_n=z)>0\}$. When $W_n>0$ we have
 \begin{equation}
  \bbP\left[ \exists z,\  \theta_{n,z} W_\infty>0 \ ; \  W_n(X_n=z)>0 \ | \ \cF_n\right]\ge \bbP\left[ \theta_{n,Z_n} W_\infty>0   \ | \ \cF_n\right]
 = \bbP(W_\infty>0).
 \end{equation}
Finally taking the $n\to \infty$ limit in \eqref{trop} we obtain that almost surely
 $$ \ind_{\{W_{\infty}>0\}}\ge \ind_{\{\forall n, W_n>0\}} \bbP(W_\infty>0),$$
which implies {that either $ \bbP(W_\infty>0)$ or that $\ind_{\{W_{\infty}>0\}}= \ind_{\{\forall n, W_n>0\}}$ a.s.\ yielding the second claim in \eqref{tropic}}.
 \qed

 \medskip
 
 {\bf Acknowledgements:}
The author is grateful to Shuta Nakajima for comments on an early version of the manuscript, and to anonymous referees for helpful comments and the suggestion add Proposition \ref{auxil}.
He acknowledges the support of a productivity grand from CNQq and of a CNE grant from FAPERj.

\end{document}